\journalname{Annals of the Institute of Statistical Mathematics}
\definecolor{Blue}{rgb}{0,0,.4}
\newtheorem{theo}{Theorem}[section]
\newtheorem{lem}{Lemma}[section]
\newtheorem{prop}{Proposition}[section]
\newtheorem{rem}{Remark}[section]
\newtheorem{defi}{Definition}[section]
\newtheorem{cor}{Corollary}[section]
\mathchardef \al   ="010B
  \renewcommand{\alpha}{\textrm{\scalebox{.883}{$\al$}}}
\mathchardef \bet  ="010C
  \renewcommand{\beta}{\textrm{\scalebox{.882}{$\bet$}}}
\mathchardef \gam  ="010D
  \renewcommand{\gamma}{\textrm{\scalebox{.883}{$\gam$}}}
\mathchardef \del  ="010E
  \renewcommand{\delta}{\textrm{\scalebox{.882}{$\del$}}}
\mathchardef \eps  ="010F
  \renewcommand{\epsilon}{\textrm{\scalebox{.883}{$\eps$}}}
\mathchardef \zet  ="0110
  \renewcommand{\zeta}{\textrm{\scalebox{.882}{$\zet$}}}
\mathchardef \et   ="0111
  \renewcommand{\eta}{\textrm{\scalebox{.883}{$\et$}}}
\mathchardef \thet ="0112
  \renewcommand{\theta}{\textrm{\scalebox{.93}{$\thet$}}}
\mathchardef \iot  ="0113
  \renewcommand{\iota}{\textrm{\scalebox{.883}{$\iot$}}}
\mathchardef \kapp ="0114
  \renewcommand{\kappa}{\textrm{\scalebox{.883}{$\kapp$}}}
\mathchardef \lambd="0115
  \renewcommand{\lambda}{\textrm{\scalebox{.883}{$\lambd$}}}
\mathchardef \m    ="0116
  \renewcommand{\mu}{\textrm{\scalebox{.882}{$\m$}}}
\mathchardef \n    ="0117
  \renewcommand{\nu}{\textrm{\scalebox{.883}{$\n$}}}
\mathchardef \ksi  ="0118
  \renewcommand{\xi}{\textrm{\scalebox{.882}{$\ksi$}}}
\mathchardef \pii  ="0119
  \renewcommand{\pi}{\textrm{\scalebox{.883}{$\pii$}}}
\mathchardef \ro   ="011A
  \renewcommand{\rho}{\textrm{\scalebox{.883}{$\ro$}}}
\mathchardef \sig  ="011B
  \renewcommand{\sigma}{\textrm{\scalebox{.883}{$\sig$}}}
\mathchardef \ta   ="011C
  \renewcommand{\tau}{\textrm{\scalebox{.883}{$\ta$}}}
\mathchardef \upsil="011D
  \renewcommand{\upsilon}{\textrm{\scalebox{.883}{$\upsil$}}}
\mathchardef \fii  ="011E
  \renewcommand{\phi}{\textrm{\scalebox{.882}{$\fii$}}}
\mathchardef \xii  ="011F
  \renewcommand{\chi}{\textrm{\scalebox{.883}{$\xii$}}}
\mathchardef \psii ="0120
  \renewcommand{\psi}{\textrm{\scalebox{.883}{$\psii$}}}
\mathchardef \omeg ="0121
  \renewcommand{\omega}{\textrm{\scalebox{.883}{$\omeg$}}}
\mathchardef \vthet="0123
  \renewcommand{\vartheta}{\textrm{\scalebox{.93}{$\vthet$}}}
\mathchardef \vpi  ="0125
  \renewcommand{\varpi}{\textrm{\scalebox{.883}{$\vpi$}}}
\mathchardef \vsig  ="0126
  \renewcommand{\varsigma}{\textrm{\scalebox{.883}{$\vsig$}}}
  \mathchardef \vphi  ="0127
  \renewcommand{\varphi}{\textrm{\scalebox{.882}{$\vphi$}}}
\newcommand{\R}{\mathds{R}}
\newcommand{\N}{\mathds{N}}
\newcommand{\E}{\operatorname{\mathds{E}}}
\newcommand{\Var}{\operatorname{\mathsf{Var}}}
\newcommand{\CH}{\mathcal{H}}
\newcommand{\ds}{\displaystyle}
\newcommand{\ud}{\hspace*{.2ex}\textrm{\rm d}}
\newcommand{\IP}{\mathrm{IP}}
\newcommand{\invsubset}{\mathrel{\reflectbox{\rotatebox[origin=c]{90}{$\subseteq$}}}}
\let\originalleft\left
\let\originalright\right
\renewcommand{\left}{\mathopen{}\mathclose\bgroup\originalleft}
\renewcommand{\right}{\aftergroup\egroup\originalright}
\begin{document}

\title{Unified extension of variance bounds for integrated Pearson family}
%\subtitle{Do you have a subtitle?\\ If so, write it here}

\titlerunning{Variance bounds for integrated Pearson family}        % if too long for running head

\author{G. Afendras}

%\authorrunning{Short form of author list} % if too long for running head

\institute{G. Afendras
           \at
              Department of Mathematics, Section of Statistics and O.R.\\
              University of Athens\\
              Panepistemiopolis, 157 84 Athens, Greece\\
              \email{g\_afendras@math.uoa.gr}
           }

\date{}%Received: date / Revised: date}
% The correct dates will be entered by the editor

\maketitle

\begin{abstract}
We use some properties of orthogonal polynomials to provide a class of upper/lower variance bounds for a function $g(X)$ of an absolutely continuous \mbox{random} variable $X$, in terms of the derivatives of $g$ up to some order. The new bounds are better than the existing ones.
\keywords{Completeness \and Derivatives of higher order \and Fourier coefficients \and Orthogonal Polynomials \and Parseval identity \and Pearson family of distributions \and Rodrigues-type formula \and Variance Bounds}
\end{abstract}

\section{Introduction}\label{sec1}
\setcounter{equation}{0}
 Let $Z$ be a standard normal random variable and $g:\R\to\R$
 any absolutely continuous (a.c.) function with derivative $g'$.
 \cite{Cher}, using Hermite polynomials, proved that
 (see also the previous papers by
 \cite{Nash}, \cite{BL})
 %\begin{equation}
 %\label{Chernoff}
 \[
 \Var g(Z)\leq \E\big(g'(Z)\big)^2,
 \]
 %\end{equation}
 provided that $\E\big(g'(Z)\big)^2<\infty$, where the
 equality holds if and only if $g$
 is a polynomial of degree at most one --
 a linear function. This inequality plays an important role in
 the isoperimetric problem and has been extended and generalized
 by several authors; see, e.g., \cite{Chen,CP1,Pap1,HK,PP,P-Rao} and references therein.

 The results of the present paper are related to the following class of random variables [cf. \cite{Kor,DZ,Joh}; see \cite{APP,AP1,AP2}].

 \begin{defi}\label{defi IP(mu,q)}
 {\rm [Integrated Pearson Family]\quad}
 Let $X$ be a random variable with density $f$ and finite mean $\mu=\E X$. We say that $X$ (or its density) belongs to the integrated Pearson family (or integrated Pearson system) if there exists a quadratic polynomial $q(x)=\delta x^2+\beta x+\gamma$ (with $\delta,\beta,\gamma\in\R$, $|\delta|+|\beta|+|\gamma|>0$) such that
 \begin{equation}\label{qua1}
 \int_{-\infty}^{x}(\mu-t)f(t)\ud{t}=q(x)f(x) \quad \text{for all} \quad x\in\R.
 \end{equation}
 This fact will be denoted by
 $X$ or $f\sim\IP(\mu;q)$ or, more explicitly, $X$ or $f\sim\IP(\mu;\delta,\beta,\gamma)$.
 \end{defi}

 The definition of this class is sometimes considered as equivalent to the Pearson family; cf. \cite{Kor,Joh}. However, this is not precise. In fact, several properties holding for integrated random variables, are not true for all Pearson distributions. For instance, Properties ${\rm P}_3$ and ${\rm P}_4$ in \cite{Ord}, pp. 4--5, are not informative for the behavior of moments (see (1.7)), unless the distribution is integrated Pearson. The same is true for eq. (12.45), p. 22, of \cite{JKB}. In a review paper by \cite{DZ}, the classification of Pearson distributions were related to orthogonal polynomials (see Table 2, p. 296). This implicitly stated family is close to what we call ``integrated Pearson family''. Its properties have been analyzed in detail in a recent work by \cite{AP1}.

 Let a random variable $X\sim\IP(\mu;q)$ and consider a suitable function $g$. \cite{Joh} established Poincar\'e-type upper/lower bounds for the variance of $g(X)$ of the form
 \begin{equation}\label{existing bounds}
 (-1)^{n}\big(\Var{g(X)}-{S}_{n}\big)\geq0,
 \quad \textrm{where} \quad
 {S}_n=\sum_{k=1}^{n}(-1)^{k-1}\frac{\E q^{k}(X)\big(g^{(k)}(X)\big)^2}{k!\prod_{j=0}^{k-1}(1-j\delta)}.
 \end{equation}
 In particular, for the normal see \cite{Pap1} and \cite{HK}; for the gamma see \cite{Pap1}.

 \cite{APP}, using Bessel's inequality, showed that
 \begin{equation}\label{Bessel}
 \Var g(X)\ge\sum_{k=1}^{n}\frac{\E^2q^k(X)g^{(k)}(X)}{k!\E q^k(X)\prod_{j=k-1}^{2k-2}(1-j\delta)};
 \end{equation}
 for the case $n=1$ see \cite{Cac}.

 \cite{AP2} showed that, under appropriate conditions, the following two forms of Chernoff-type upper bounds of the variance of $g(X)$ are valid:
 \begin{equation}\label{S_n upper strong}
 S_{n,\rm(str)}=\sum_{i=1}^{n}\frac{\E^2q^i(X)g^{(i)}(X)}{i!\E{q^i(X)}\prod_{j=i-1}^{2i-2}(1-j\delta)}
               +\frac{\E q^n(X)\big(g^{(n)}(X)\big)^2-\frac{\E^2q^n(X)g^{(n)}(X)}{\E{q^n(X)}}}{(n+1)!\prod_{j=n}^{2n-1}(1-j\delta)},
 \end{equation}
 \begin{equation}\label{S_n upper weak}
 S_{n,\rm(weak)}=\sum_{i=1}^{n-1}\frac{\E^2q^i(X)g^{(i)}(X)}{i!\E{q^i(X)}\prod_{j=i-1}^{2i-2}(1-j\delta)}
              +\frac{\E q^n(X)\big(g^{(n)}(X)\big)^2}{n!\prod_{j=n-1}^{2n-2}(1-j\delta)}.
 \end{equation}
 The equality in (\ref{S_n upper strong}) holds when $g$ is a polynomial of degree at most $n+1$ and in (\ref{S_n upper weak}) when $g$ is a polynomial of degree at most $n$. The bound (\ref{S_n upper weak}) for beta distributions has been shown by \cite{WZ}, using Jacobi polynomials.
 \medskip

 In the present article we shall apply a technique introduced by \cite{AP2} in order to obtain a general class of bounds. Specifically, in Section \ref{sec2} we provide a new class of upper/lower bounds for the variance of $g(X)$. They can be called as ``Poincar\'e-type'' of order $n$ and with point balance $m$. They hold for a subfamily of Pearson distributions. In particular, the bound for $N(\mu,\sigma^2)$ distribution, namely
 \[
 S_{m,n}(g)=\sum_{i=1}^{m}\frac{{m\choose{i}}\sigma^{2i}}{(m+n)_i}\E^2g^{(i)}(X)+
            \sum_{i=1}^{n}(-1)^{i-1}\frac{{n\choose{i}}\sigma^{2i}}{(m+n)_i}\E\big(g^{(i)}(X)\big)^2
 \]
 (for $(x)_k$ see Definition \ref{defi (x)_k}), satisfies the inequality
 \[
 (-1)^n\big(\Var{g(X)}-S_{m,n}(g)\big)\ge0,
 \]
 where the equality holds if and only if $g$ is a polynomial of degree at most $m+n$.
 \medskip

 For fixed $n$, Section \ref{sec3} investigates the bounds $S_{m,n}(g)$ as $m$ increases. It is shown that the bound $S_{m+1,n}(g)$ is better than $S_{m,n}(g)$, i.e.,
 \[
 \big|\Var{g(X)}-S_{m+1,n}(g)\big|\le\big|\Var{g(X)}-S_{m,n}(g)\big|.
 \]
 Also, for any suitable function $g$, $S_{m,n}(g)\to\Var{g(X)}$ as $m\to\infty$.
 \medskip

% Finally, in Section \ref{sec4} we discuss an application of bounds to the uniformly minimum variance unbiased estimation of $\ln\lambda$ where the random sample arises from ${\rm{Exp}}(\lambda)$ distribution.

 \section{Unified extension of variance bounds}
 \label{sec2}
 \setcounter{equation}{0}
 This section presents a wide class of variance bounds. First we prove the following useful lemma.
 \begin{lem}
 \label{lem E q^m(X) |g^(m)(X)|<00 => E q^i(X) |g^(i)(X)|<00}
 Let $X\sim\IP(\mu;q)\equiv\IP(\mu;\delta,\beta,\gamma)$ and consider a positive integer $m$ with $\E X^{2m}<\infty$. Suppose that the function $g$ is defined on the support $J=(\alpha,\omega)$ of $X$, and assume that $g\in C^{m-1}(J)$ and $g^{(m-1)}:=\frac{\ud^{m-1}g(x)}{\ud{x}^{m-1}}$ is absolutely continuous with (a.s.) derivative $g^{(m)}$. If $\E q^m(X)|g^{(m)}(X)|<\infty$ then
 \[
 \E q^i(X)|g^{(i)}(X)|<\infty \ \ \ \textrm{for all} \ \ i=0,1,\ldots,m-1.
 \]
 \end{lem}
 \begin{proof}
 Fix $i\in\{0,1,\ldots,m-1\}$ and assume that $\E q^{i+1}(X)|g^{(i+1)}(X)|<\infty$. Setting $h:=g^{(i)}$ we have that $\E q^{i+1}(X)|h'(X)|<\infty$. Consider the random variable $X_i$ with density $f_i=q^if/\E{q^i(X)}\sim\IP(\mu_i;q_i)$, where $\mu_i=(\mu+i\beta)/(1-2i\delta)$, $q_i=q/(1-2i\delta)$ and $J(X_i)=J$ (see Appendix \ref{appendix properties}). One can easily see that $\E q_i(X_i)|h'(X_i)|<\infty$. Since $\E X^{2m}<\infty$ we get $\E X_i^{2(m-i)}<\infty$. Hence $\E X_i^{2}<\infty$ because $m-i\ge1$. Using Lemma \ref{lem E q^i|g^(i)|} (for $k=1$) we have that $\E |P_{1,i}(X_i)h(X_i)|<\infty$, where $P_{1,i}(x)=x-\mu_i$ is the Rodrigues polynomial of degree $1$ corresponding to the density $f_i$. Since $\mu_i\in(\alpha,\omega)$, we can find $\epsilon>0$ such that $[\mu_i-\epsilon,\mu_i+\epsilon]\subset(\alpha,\omega)$. Thus,
 \medskip

 \noindent
 $\E |P_{1,i}(X_i)h(X_i)|=$

 \noindent
 $=\int_{\alpha}^{\mu_i-\epsilon}(\mu_i-x)|h(x)|f_i(x)\ud{x}
   +\int_{\mu_i-\epsilon}^{\mu_i+\epsilon}|(\mu_i-x)h(x)|f_i(x)\ud{x}
   +\int_{\mu_i+\epsilon}^{\omega}(x-\mu_i)|h(x)|f_i(x)\ud{x}$

 \noindent
 $\ge\epsilon\int_{\alpha}^{\mu_i-\epsilon}|h(x)|f_i(x)\ud{x}
   +\int_{\mu_i-\epsilon}^{\mu_i+\epsilon}|(\mu_i-x)h(x)|f_i(x)\ud{x}
   +\epsilon\int_{\mu_i+\epsilon}^{\omega}|h(x)|f_i(x)\ud{x}$.
 \medskip

 \noindent
 Hence, $\int_{\alpha}^{\mu_i-\epsilon}|h(x)|f_i(x)\ud{x}$ and $\int_{\mu_i+\epsilon}^{\omega}|h(x)|f_i(x)\ud{x}$ are finite. The function $h$ is continuous in the compact interval $[\mu_i-\epsilon,\mu_i+\epsilon]$ so $\int_{\mu_i-\epsilon}^{\mu_i+\epsilon}|h(x)|f_i(x)\ud{x}$ is finite. Therefore, $\E|h(X_i)|=\E|g^{(i)}(X_i)|=\frac{\E{q^i(X)}|g^{(i)}(X)|}{\E{q^i(X)}}$ is finite.

 \noindent
 We have shown that $\E q^{i+1}(X)g^{(i+1)}(X)<\infty$ implies $\E q^{i}(X)g^{(i)}(X)<\infty$. Applying this for $i=m-1,m-2,\ldots,0$, the proof is completed.
 \end{proof}

 Now we give the following definitions that will be used in the sequel.
 \begin{defi}\label{defi (x)_k}
 For $x\in\R$ and $k\in\N$ define:
 \smallskip

 {\rm(a)} $(x)_k=x(x-1)\cdots(x-k+1)$, with $(x)_0=1$.
 \smallskip

 {\rm(b)} $[x]_k=x(x+1)\cdots(x+k-1)$, with $[x]_0=1$.
 \smallskip

 \noindent
 Note that $[x]_k=(-1)^k(-x)_k=(x+k-1)_k$.
 \end{defi}
 \begin{defi}\label{defi H^m,n(X)}
 {\rm[cf. \cite{AP2}]}\quad
 Assume that $X\sim\IP(\mu;q)$ and denote by $q(x)=\delta x^2+\beta x+\gamma$ its quadratic polynomial. Let $J(X)=(\alpha;\omega)$ be the support of $X$ and fix the non-negative integers $m,n$ such that $\E|X|^{2\ell}$ is finite, where $\ell=\max\{m,n\}$. We shall denote by $\CH^{m,n}(X)$ the class of Borel functions $g:(\alpha,\omega)\to\R$ satisfying the following properties:
 \begin{itemize}
  \item[]\begin{itemize}
          \item[$\rm H_1:$]
          $g\in C^{\ell-1}(\alpha,\omega)$ and the function $g^{(\ell-1)}(x):=\frac{\ud^{\ell-1}g(x)}{\ud{x}^{\ell-1}}$ is a.c. in $(\alpha,\omega)$ with a.s.\ derivative $g^{(\ell)}$.
         \end{itemize}
 \item[]\begin{itemize}
          \item[$\rm H_2:$]
          $\E q^n(X)\big(g^{(n)}(X)\big)^2<\infty$ and $\E q^m(X)|g^{(m)}(X)|<\infty$.
         \end{itemize}
 \end{itemize}
 \end{defi}
 Note that from (\ref{E q^n(X)g^(n)(X)<inf => E q^i(X)g^(i)(X)<inf}), Lemma \ref{lem E q^m(X) |g^(m)(X)|<00 => E q^i(X) |g^(i)(X)|<00} and $\E^2{q^i(X)}|g^{(i)}(X)|\le\E{q^i(X)}\cdot\E{q^i(X)}\big(g^{(i)}(X)\big)^2$, $i=1,2,\ldots,n$, if $m\le n$ and if $\E q^n(X)\big(g^{(n)}(X)\big)^2$ is finite then it is implied that $\E q^m(X)|g^{(m)}(X)|$ is finite. For $m=n=0$, the property $\rm H_1$ does not impose any restrictions on $g$, and
 \[
 \CH^{0,0}(X)\equiv L^2(\R,X):=\big\{g:(\alpha,\omega)\to\R \ \textrm{such that} \Var g(X)<\infty\big\}.
 \]
 Also, it is obvious that $\CH^{0,n}=\CH^{1,n}=\cdots=\CH^{n,n}$.
 \newline
 Furthermore, we shall denote by $\CH^{\infty,n}(X)$ and $\CH^{\infty}(X)\equiv\CH^{m,\infty}(X)$ [$m$ is arbitrary because in this case this index is insignificant] the classes of functions $\CH^{\infty,n}(X):=\cap_{m=0}^{\infty}\CH^{m,n}(X)$ and $\CH^{\infty}(X):=\cap_{n=0}^{\infty}\CH^{\infty,n}(X)$; i.e,
 \[
 \begin{split}
 &\CH^{\infty,n}(X)=\big\{g\in{C}^{\infty}(J): \E{q^n(X)}\big(g^{(n)}(X)\big)^2\!<\infty \ \textrm{and} \ \E{q^i(X)}|g^{(i)}(X)|<\infty \ \forall i>n\big\},\\
 &\CH^{\infty}(X)=\big\{g\in{C}^{\infty}(J): \E{q^n(X)}\big(g^{(n)}(X)\big)^2<\infty \ \forall n\in\N\big\}.
 \end{split}
 \]

From Lemma \ref{lem E q^m(X) |g^(m)(X)|<00 => E q^i(X) |g^(i)(X)|<00} and from (\ref{E q^n(X)g^(n)(X)<inf => E q^i(X)g^(i)(X)<inf}),
%and $\E^2{q^i(X)}|g^{(i)}(X)|\le\E{q^i(X)}\cdot\E{q^i(X)}\big(g^{(i)}(X)\big)^2<\infty$, $i=0,1,\ldots,n$,
we conclude that the (finite or infinite) sequence $\CH^{m,n}(X)$ is decreasing in $m$ and in $n$. In particular, if all moments of $X$ exist then
\[
\begin{array}{cccccccccc}
  L^2(\R,X)\equiv & \CH^{0,0}(X) &           &              &           &              &           &        &           &                 \\
[-.5ex]
                  &  \invsubset  &           &              &           &              &           &        &           &                 \\
                  & \CH^{1,0}(X) & \supseteq & \CH^{1,1}(X) &           &              &           &        &           &                 \\
[-.5ex]
                  &  \invsubset  &           & \invsubset   &           &              &           &        &           &                 \\
                  & \CH^{2,0}(X) & \supseteq & \CH^{2,1}(X) & \supseteq & \CH^{2,2}(X) &           &        &           &                 \\
[-.5ex]
                  &  \invsubset  &           & \invsubset   &           & \invsubset   &           &        &           &                 \\
[-1.5ex]
                  &  \vdots      &           & \vdots       &           & \vdots       &           &        &           &                 \\
[-.5ex]
                  &  \invsubset  &           & \invsubset   &           & \invsubset   &           &        &           &                 \\
                  &\CH^{\infty,0}(X)&\supseteq&\CH^{\infty,1}(X)&\supseteq&\CH^{\infty,2}(X)&\supseteq&\cdots&\supseteq & \CH^{\infty}(X).\\
\end{array}
\]

 Let $X\sim\IP(\mu;\delta,\beta,\gamma)$ with $\delta\le0$. Also, consider two (fixed) non-negative integers $m,n$, with $n\ne0$, and a function $g\in\CH^{m,n}(X)$. According to Parseval's identity we have that
 \begin{equation}\label{Var g(X)}
 \Var{g(X)}=\sum_{k=1}^{\infty}c_k^2,
 \end{equation}
 where $c_k=\E g(X)\varphi_k(X)$ are the Fourier coefficients of $g$ with respect to the corresponding (to $X$) orthonormal polynomial system $\{\varphi_k\}_{k=0}^\infty$.

 \noindent
 For each $i=1,2,\ldots,n$ the function $g^{(i)}\in\CH^{m-i,n-i}(X_i)$ and, from Parceval's identity again, $\E\big(g^{(i)}(X_i)\big)^2=\sum_{k=0}^{\infty}\big(c^{(i)}_k\big)^2$,
 where $c^{(i)}_k=\E g^{(i)}(X_i)\varphi_{k,i}(X_i)$ are the Fourier coefficients of $g^{(i)}$ with respect to the orthonormal polynomial system $\{\varphi_{k,i}\}_{k=0}^\infty$ corresponding to $X_i\sim f_i\propto q^if$; see Appendix \ref{appendix properties}. Using (\ref{E varphi_k,i(X_i)g^(i)(X_i)}) we have that
 \begin{equation}\label{E(g^(i)(X))^2}
 {\E{q^{i}(X)\big(g^{(i)}(X)\big)^2}}=\sum_{k=i}^{\infty}\Big((k)_i{\prod_{j=k-1}^{k+i-2}(1-j\delta)}\Big)c_k^2,
 \quad i=1,2,\ldots,n,
 \end{equation}
 see \cite[Lemma 3.1, eq. (3.4)]{AP2}, where each coefficient of $c_k^2$ is positive. Also, from (\ref{E q^k(X)g^(k)(X)}),
 \begin{equation}\label{Eg^(i)(X)}
 {\E{q^{i}(X)g^{(i)}(X)}}=\Big(i!\E{q^i(X)}\prod_{j=i-1}^{2i-2}(1-j\delta)\Big)^{1/2}c_i,
 \quad i=1,2,\ldots,m,
 \end{equation}
 see \cite[Section 3, eq.'s (3.2) and (3.5)]{APP}.

 \noindent
 Let $\bm{\lambda}_n=(\lambda_{1;n},\lambda_{2;n},\ldots,\lambda_{n;n})^{\rm t}\in\R^n$. According to Tonelli's Theorem we have that
 \linebreak
 $\sum_{i=1}^n\sum_{k=i}^\infty|\lambda_{i;n}(k)_i\prod_{j=k-1}^{k+i-2}(1-j\delta)|c_k^2
 =\sum_{i=1}^n|\lambda_{i;n}|\sum_{k=i}^\infty\big[(k)_i\prod_{j=k-1}^{k+i-2}(1-j\delta)\big]c_k^2=$
 \linebreak
 $\sum_{i=1}^n|\lambda_{i;n}|\E{q^i(X)}\big(g^{(i)}\big)^2<\infty$
 and, using Fubini's Theorem,
 \begin{equation}\label{sum_i=1^n lamdba_i,n Eq^i(X)g^(i)(X)/Eq^i(X)}
 \!\sum_{i=1}^{n}\lambda_{i;n}{\E{q^{i}(X)\big(g^{(i)}(X)\big)^2}}\!\!=\!\sum_{k=1}^{\infty}\rho_{k;n}c_k^2,
 \ \textrm{where} \
 \rho_{k;n}=\!\!\!\sum_{i=1}^{\min\{k,n\}}\!\!\!\lambda_{i;n}(k)_i\!\prod_{j=k-1}^{k+i-2}\!(1-j\delta).
 \end{equation}
 We seek a vector $\bm{\lambda}_{m,n}$ such that $\rho_{m+1,n}=\rho_{m+2,n}=\cdots=\rho_{m+n,n}=1$. From (\ref{E(g^(i)(X))^2}) we obtain the system of equations
 \begin{equation}\label{system of equations}
 A_{m,n}\cdot\bm{\lambda}_{m,n}=\mathbf{1}_n,
 \end{equation}
 where the matrix $A_{m,n}\in\R^{n\times{n}}$ has $(r,c)$-element which is given by
 \medskip

 \centerline{$a_{r,c;m,n}=(m+r)_c\prod_{m+r-1}^{m+r+c-2}(1-j\delta)$}
 \medskip

 \noindent
 and the vector $\mathbf{1}_n=(1,1,\ldots,1)^{\rm t}\in\R^n$.

 \noindent
 The above system has the unique solution, see Appendix \ref{appendix system},
 \begin{equation}\label{lambda_i,n;0}
 \mbox{$\lambda_{i;m,n}={(-1)^{i-1}{n\choose{i}}}\big/\big[{(m+n)_i\prod_{j=m}^{m+i-1}(1-j\delta)}\big],\quad i=1,2,\ldots,n.$}
 \end{equation}
 From (\ref{sum_i=1^n lamdba_i,n Eq^i(X)g^(i)(X)/Eq^i(X)}) and (\ref{lambda_i,n;0}), using the hypergeometric series (\ref{Hypergeometric}), we have that $\rho_{k;m,n}=1-(m+n-k)_n\prod_{j=m+k}^{m+n+k-1}(1-j\delta)\big/\big[(m+n)_n\prod_{j=m}^{m+n-1}(1-j\delta)\big]$. Equivalently,
 %\begin{equation}\label{rho_k,n;0}
 \[
 \rho_{k;m,n}=\left\{\begin{array}{l@{\hspace{0ex}}c@{\hspace{1.5ex}}r}
                1-\frac{(m+n-k)_n\prod_{j=m+k}^{m+n+k-1}(1-j\delta)}{(m+n)_n\prod_{j=m}^{m+n-1}(1-j\delta)} & , & 1\le k \le m,\hspace{3.45ex}\\
                1                     & , & m< k \le m+n, \\
                1+(-1)^{n-1}\frac{(k-m-1)_n\prod_{j=m+k}^{m+n+k-1}(1-j\delta)}{(m+n)_n\prod_{j=m}^{m+n-1}(1-j\delta)} & , & k > m+n.
              \end{array}\right.
 \]
 %\end{equation}
 Thus,
 \begin{equation}\label{linear combination}
 \begin{split}
  \sum_{i=1}^{n}(-1)&^{i-1}\frac{{n\choose{i}}\E{q^{i}(X)\big(g^{(i)}(X)\big)^2}}{(m+n)_i\prod_{j=m}^{m+i-1}(1-j\delta)}\\
  &=\Var{g(X)}-\sum_{k=1}^{n}\frac{(m+n-k)_n\prod_{j=m+k}^{m+n+k-1}(1-j\delta)}{(m+n)_n\prod_{j=m}^{m+n-1}(1-j\delta)}c_k^2\\
  &\qquad\quad+\sum_{k>m+n}(-1)^{n-1}\frac{(k-m-1)_n\prod_{j=m+k}^{m+n+k-1}(1-j\delta)}{(m+n)_n\prod_{j=m}^{m+n-1}(1-j\delta)}c_k^2.
 \end{split}
 \end{equation}

 The main result of this paper is contained in the following theorem.

 \begin{theo}\label{theo Poincare-type bounds}
 Let $X\sim\IP(\mu;\delta,\beta,\gamma)$ with $\delta\le0$. Fix two non-negative integers $m,n$ [with $n\ne0$] and a function $g\in\CH^{m,n}(X)$. Consider the quantity
 \begin{equation}\label{S_n}
 S_{m,n}(g)=\sum_{i=1}^{m}a_i\E^2q^i(X)g^{(i)}(X)+\sum_{i=1}^{n}(-1)^{i-1}b_i\E{q^{i}(X)\big(g^{(i)}(X)\big)^2},
 \end{equation}
 where
 \[
 a_i:=\frac{{m\choose{i}}\prod_{j=m+i}^{m+n+i-1}(1-j\delta)}{(m+n)_i\E{q^i(X)}\big(\prod_{j=i-1}^{2i-2}(1-j\delta)\big)\prod_{j=m}^{m+n-1}(1-j\delta)},
 \]
 \[
 b_i:=\frac{{n\choose{i}}}{(m+n)_i\prod_{j=m}^{m+i-1}(1-j\delta)}
 \]
 are strictly positive constants (depending only on $m,n$ and $X$) and the empty sums (when $m=0$) are treated as zero. Then the following inequality holds:
 %\begin{equation}\label{(-1)^n(Var[g(X)-S_n])}
 \[
 (-1)^n\big(\Var{g(X)}-S_{m,n}(g)\big)\ge0,
 \]
 %\end{equation}
 and where $S_{m,n}(g)$ becomes equal to $\Var{g(X)}$ if and only if $g$ is a polynomial of degree at most $m+n$.
 \end{theo}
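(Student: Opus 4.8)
The plan is to express $S_{m,n}(g)$ entirely through the squared Fourier coefficients $c_k^2$ and to show that $(-1)^n\big(\Var g(X)-S_{m,n}(g)\big)$ collapses to a single tail series $\sum_{k>m+n}R_k\,c_k^2$ whose coefficients $R_k$ are nonnegative precisely because $\delta\le0$.

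First I would treat the two sums in (\ref{S_n}) separately. The second sum is, by the very definition of $b_i$, the left-hand side of (\ref{linear combination}), which has already been rewritten there as $\Var g(X)$ minus a finite sum over the low indices plus a tail sum over $k>m+n$. For the first sum I would square (\ref{Eg^(i)(X)}) to get $\E^2 q^i(X)g^{(i)}(X)=i!\,\E q^i(X)\prod_{j=i-1}^{2i-2}(1-j\delta)\,c_i^2$; inserting this together with the explicit value of $a_i$ cancels the factor $\E q^i(X)\prod_{j=i-1}^{2i-2}(1-j\delta)$, and, using $i!\binom{m}{i}=(m)_i$, the coefficient of $c_i^2$ becomes $(m)_i\prod_{j=m+i}^{m+n+i-1}(1-j\delta)\big/\big[(m+n)_i\prod_{j=m}^{m+n-1}(1-j\delta)\big]$ for $i=1,\dots,m$.

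The crux is the Pochhammer identity $(m)_i/(m+n)_i=(m+n-i)_n/(m+n)_n$ (both sides equal $m!\,(m+n-i)!/[(m-i)!\,(m+n)!]$). It shows that the coefficient of $c_i^2$ just obtained for the first sum is exactly the coefficient subtracted in the low-index part of (\ref{linear combination}). Hence, upon adding the two sums, these finite contributions cancel term by term, leaving
\[
 S_{m,n}(g)=\Var g(X)+(-1)^{n-1}\!\!\sum_{k>m+n}\! R_k\,c_k^2,\qquad R_k:=\frac{(k-m-1)_n\prod_{j=m+k}^{m+n+k-1}(1-j\delta)}{(m+n)_n\prod_{j=m}^{m+n-1}(1-j\delta)},
\]
so that $(-1)^n\big(\Var g(X)-S_{m,n}(g)\big)=\sum_{k>m+n}R_k\,c_k^2$.

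It then remains only to check that $R_k\ge0$ for $k>m+n$, which is where the hypothesis $\delta\le0$ enters: every index $j$ occurring is nonnegative, so $1-j\delta\ge1>0$ and both products are positive, while $(m+n)_n=(m+1)\cdots(m+n)>0$; finally $(k-m-1)_n=(k-m-1)\cdots(k-m-n)$ has all factors $\ge1$ when $k>m+n$, hence is strictly positive. Thus $R_k>0$ and the tail series is a sum of nonnegative terms, yielding the inequality. For the equality case, since $R_k>0$ the sum vanishes iff $c_k=0$ for all $k>m+n$, which by Parseval and completeness of $\{\varphi_k\}$ is equivalent to $g$ coinciding (a.s.)\ with $\sum_{k=0}^{m+n}c_k\varphi_k$, i.e.\ with a polynomial of degree at most $m+n$; the converse is immediate by orthogonality. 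I expect the only genuinely fiddly step to be the Pochhammer bookkeeping of the cancellation, the sign analysis being automatic once $\delta\le0$ is invoked.
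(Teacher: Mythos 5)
Your proof is correct and follows essentially the same route as the paper: the paper's own (very terse) proof obtains $(-1)^n\big(\Var{g(X)}-S_{m,n}(g)\big)=\sum_{k>m+n}r_{k;m,n}(X)\,c_k^2$ precisely by combining (\ref{Eg^(i)(X)}) with (\ref{linear combination}), which is exactly the cancellation you carry out explicitly via $(m)_i/(m+n)_i=(m+n-i)_n/(m+n)_n$. Your write-up merely supplies the Pochhammer bookkeeping the paper leaves implicit (and, correctly, treats the subtracted low-index sum in (\ref{linear combination}) as running over $k=1,\ldots,m$), so it matches the intended argument.
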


 \begin{proof}
 From (\ref{S_n}), via (\ref{Eg^(i)(X)}) and (\ref{linear combination}), we obtain that $(-1)^n\big(\Var{g(X)}-S_{m,n}(g)\big)=R_{m,n}(g)$, where the residual
 \begin{equation}\label{(-1)^n(Var[g(X)-S_n]) 2}
 R_{m,n}(g)=\sum_{k>m+n}r_{k;m,n}(X)c_k^2
 :=\sum_{k>m+n}\frac{(k-m-1)_n\prod_{j=m+k}^{m+n+k-1}(1-j\delta)}{(m+n)_n\prod_{j=m}^{m+n-1}(1-j\delta)}c_k^2
 \end{equation}
 is non-negative and equals to zero if and only if $c_k=0$ for all $k>m+n$, i.e., the function $g:J(X)\to\R$ is a polynomial of degree at most $m+n$.
 \hfill$\square$
 \end{proof}
 \medskip

 Let $X\sim\IP(\mu;\delta,\beta,\gamma)$ with $\delta\le0$. Then $X$ is a linear function of a normal, a gamma or a beta random variable, see \cite{AP1}. The bounds $S_{m,n}(g)$ for the three main cases are included in Table \ref{table S_n}.

 \begin{table}[htb]
 \caption{Specific form of the bounds $S_{m,n}(g)$ for normal, gamma and beta distributions.}\label{table S_n}
 %\vspace{-2ex}
 %\begin{center}
 \begin{tabular}{@{\hspace{0ex}}lc@{\hspace{0ex}}ccc@{\hspace{0ex}}}
    \hline

    \hline
    \bf distribution & \bf \qquad parameters & \bf\em f & \bf{\em{J}}({\em{X}}) & \bf{\em{q}}({\em{x}}) \\
     \cline{2-5}
     & \multicolumn{4}{c}{\bf bounds $\textrm{\em S}_{\textrm{{\em m{\bf,}n}}}\textrm{({\em g})}$} \\
    \hline

    \hline\\
    [-2.2ex]
    Normal$(\mu,\sigma^2)$ & $\qquad\mu\in\R$, $\sigma^2>0$ & $\frac{1}{\sqrt{2\pi}\sigma}e^{-(x-\mu)^2/2\sigma^2}$ & $\R$ & $\sigma^2$ \\
    [2.5ex]
     &\multicolumn{4}{r}{\footnotesize$\sum_{i=1}^{m}\frac{{m\choose{i}}\sigma^{2i}}{(m+n)_i}\E^2g^{(i)}(X)
                                       +\sum_{i=1}^{n}(-1)^{i-1}\frac{{n\choose{i}}\sigma^{2i}}{(m+n)_i}\E\big(g^{(i)}(X)\big)^2$}\\
    [1ex]
     \cline{2-5}\\
    [-2.2ex]
    Gamma$(\alpha,\lambda)$& $\qquad \alpha,\lambda>0$ & $\frac{\lambda^\alpha}{\varGamma(\alpha)}x^{\alpha-1}e^{-\lambda{x}}$ & $(0,\infty)$ & $x/\lambda$ \\
    [2.5ex]
     &\multicolumn{4}{r}{\footnotesize$\sum_{i=1}^{m}\frac{{m\choose{i}}}{(m+n)_i[\alpha]_i}\E^2X^ig^{(i)}(X)
                                       +\sum_{i=1}^{n}(-1)^{i-1}\frac{{n\choose{i}}}{(m+n)_i\lambda^i}\E{X^i\big(g^{(i)}(X)\big)^2}$}\\
    [1ex]
     \cline{2-5}\\
    [-2.2ex]
    Beta$(\alpha,\beta)$& $\qquad\alpha,\beta>0$ & $\frac{\varGamma(\alpha+\beta)}{\varGamma(\alpha)\varGamma(\beta)}x^{\alpha-1}(1-x)^{\beta-1}$ & $(0,1)$ & $x(1-x)\big/(\alpha+\beta)$ \\
    [2.5ex]
     &\multicolumn{4}{r} {\footnotesize$\sum_{i=1}^{m}\frac{{m\choose{i}}[\alpha+\beta+m+i]_n[\alpha+\beta]_{2i}} {(m+n)_i[\alpha]_i[\beta]_i[\alpha+\beta+i-1]_i[\alpha+\beta+m]_n}\E^2X^i(1-X)^ig^{(i)}(X)$}\\
      &\multicolumn{4}{r}{\footnotesize$+\sum_{i=1}^{n}(-1)^{i-1}\frac{{n\choose{i}}}{(m+n)_i[\alpha+\beta+m]_i}\E{X^i(1-X)^i\big(g^{(i)}(X)\big)^2}$}\\
    [1ex]
    \hline

    \hline
 \end{tabular}
 %\end{center}
 \end{table}

 \begin{rem}\label{rem universal}\rm
 (a) For fixed $n$ and for any function $g\in\CH^{M,n}(X)$, where $M$ can be finite or infinite, the variance bounds $\{S_{m,n}(g)\}_{m=0}^M$ are of the same type, i.e. upper bound when $n$ is odd and lower bound when $n$ is even.
 \smallskip

 \noindent
 (b) The bounds $\{S_{m,n}(g)\}_{m=0}^{n}$ require the same conditions on the function $g$, i.e., $g\in\CH^{n,n}(X)$.
 \end{rem}

 \begin{rem}\label{rem universal and existings}\rm
 (a) The bound $S_{1,1}(g)$ is the bound $S_{1,\rm(str)}$ of (\ref{S_n upper strong}).
 \smallskip

 \noindent
 (b) The bounds $S_{0,n}(g)$ are the bounds $S_n$ which are given by (\ref{existing bounds}). Also, for the special case $m=0$, $n=1$ observe that $S_{0,1}(g)={S_{1}}={S_{1,\rm(weak)}}$; see (\ref{S_n upper weak}).
 \smallskip

 \noindent
 (c) The results shown in Theorem \ref{theo Poincare-type bounds} apply to the special case where $n=0$ (note that the second sum is empty and is treated as zero). In this case the lower bound $S_{m,0}(g)$ is reduced to the one given by (\ref{Bessel}).
 \end{rem}
 \begin{rem}\label{rem infty}
 {\rm
 Regarding the conditions on the function $g$ of Theorem \ref{theo Poincare-type bounds} we note that $g\in\CH^{\max\{m,n\},n-1}(X)\smallsetminus\CH^{\max\{m,n\},n}(X)$ implies that the bound $S_{m,n}(g)$ is trivial, i.e., $+\infty$ when $n$ is odd and $-\infty$ when $n$ is even.}
 \end{rem}

 Now, we seek for upper bounds of the non-negative residual $R_{m,n}(g)$.

 \begin{prop}\label{prop bounds of residual}
 Assume the conditions of Theorem {\rm\ref{theo Poincare-type bounds}} and, further, suppose that $g\in\CH^{T,T}(X)$ for some $T\in\{n,\ldots,m+n+1\}$. Then the residual $R_{m,n}(g)$, given by {\rm(\ref{(-1)^n(Var[g(X)-S_n]) 2})}, is bounded above by
 \begin{equation}\label{E(g^(tau)(X))^2}
 u_{\tau}\E{q^\tau(X)}\big(g^{(\tau)}(X)\big)^2, \quad \tau=n,n+1,\ldots,T,
 \end{equation}
 where $u_{\tau}\!=u_{m,n,\tau}(X)\!:={\prod_{j=2m+n+1}^{2m+2n}\!(1-j\delta)}\big/\big[{{m+n\choose{n}}(m+n+1)_\tau\prod_{j=m}^{m+n+\tau-1}\!(1-j\delta)}\big]$.
 \end{prop}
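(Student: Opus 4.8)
The plan is to translate the claimed functional inequality into a single pointwise comparison between the coefficients of $c_k^2$ in two non-negative series. By (\ref{(-1)^n(Var[g(X)-S_n]) 2}) the residual is $R_{m,n}(g)=\sum_{k>m+n}r_{k;m,n}(X)\,c_k^2$. On the other hand, since $g\in\CH^{T,T}(X)\subseteq\CH^{\tau,\tau}(X)$ for every $\tau\le T$, we have $\E q^\tau(X)\big(g^{(\tau)}(X)\big)^2<\infty$, and the derivation of (\ref{E(g^(i)(X))^2}) applies verbatim with order $\tau$, giving
$$\E q^\tau(X)\big(g^{(\tau)}(X)\big)^2=\sum_{k\ge\tau}\vartheta_{k,\tau}\,c_k^2,\qquad \vartheta_{k,\tau}:=(k)_\tau\prod_{j=k-1}^{k+\tau-2}(1-j\delta)>0.$$
Thus both sides of (\ref{E(g^(tau)(X))^2}) are non-negative series in the same variables $c_k^2$, and it remains to compare them coefficientwise.

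First I would exploit the constraint $\tau\le m+n+1$. It guarantees that the index set $\{k:k>m+n\}$ of the residual is contained in the index set $\{k:k\ge\tau\}$ of the series for $\E q^\tau(X)\big(g^{(\tau)}(X)\big)^2$. Because every summand is non-negative and $u_\tau>0$, dropping the extra terms with $\tau\le k\le m+n$ only lowers the right-hand side; hence it suffices to establish the termwise bound $r_{k;m,n}(X)\le u_\tau\,\vartheta_{k,\tau}$ for all $k>m+n$. Writing $\psi(k):=r_{k;m,n}(X)/\vartheta_{k,\tau}$, the task reduces to showing $\psi(k)\le u_\tau$ on $\{k\ge m+n+1\}$.

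The anchor of the argument is the left endpoint. Substituting $k=m+n+1$ and simplifying the Pochhammer symbols and the two $\delta$-product blocks — here $(k-m-1)_n$ collapses to $(n)_n=n!$, the blocks $\prod_{j=m}^{m+n-1}(1-j\delta)$ and $\prod_{j=m+n}^{m+n+\tau-1}(1-j\delta)$ merge into $\prod_{j=m}^{m+n+\tau-1}(1-j\delta)$, and $n!/(m+n)_n=1/\binom{m+n}{n}$ — one checks that $\psi(m+n+1)=u_\tau$ exactly. Consequently the proposition is equivalent to the statement that $\psi$ attains its maximum over $k\ge m+n+1$ at this endpoint.

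The main obstacle is precisely this monotonicity. I would analyse the one-step ratio, which factors into four explicit pieces,
$$\frac{\psi(k+1)}{\psi(k)}=\frac{k-m}{k-m-n}\cdot\frac{k-\tau+1}{k+1}\cdot\frac{1-(m+n+k)\delta}{1-(m+k)\delta}\cdot\frac{1-(k-1)\delta}{1-(k+\tau-1)\delta},$$
and show it does not exceed $1$ for $k\ge m+n+1$. The hypothesis $\delta\le0$ makes all four factors positive and forces the first and third to be $\ge1$ while the second and fourth are $\le1$; the difficulty is that no individual pairing of factors is uniformly $\le1$, so the product must be controlled as a whole. The clean route is to clear denominators and reduce $\psi(k+1)\le\psi(k)$ to a single polynomial inequality in $k$ whose coefficients depend on $m,n,\tau,\delta$, then verify it at $k=m+n+1$ and check that the associated difference is nondecreasing in $k$ thereafter; the admissible ranges $\delta\le0$ and $n\le\tau\le m+n+1$ are exactly what keep every sign under control. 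Once $\psi(k)\le u_\tau$ is secured, multiplying by $c_k^2\ge0$ and summing over $k>m+n$ yields $R_{m,n}(g)\le u_\tau\,\E q^\tau(X)\big(g^{(\tau)}(X)\big)^2$, which is (\ref{E(g^(tau)(X))^2}) for every admissible $\tau$.
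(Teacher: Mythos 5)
Your proposal follows the same strategy as the paper's own proof: expand $u_\tau\E q^\tau(X)\big(g^{(\tau)}(X)\big)^2$ in the Fourier coefficients $c_k^2$ via (\ref{E(g^(i)(X))^2}), discard the non-negative terms with $\tau\le k\le m+n$, and reduce everything to the termwise inequality $r_{k;m,n}(X)\le u_\tau\vartheta_{k,\tau}$ for $k>m+n$, anchored at the (correctly verified) identity $\psi(m+n+1)=u_\tau$. The paper phrases this through the reciprocal sequence $w_{k;\tau}=u_\tau\vartheta_{k,\tau}/r_{k;m,n}(X)$ and asserts that $w_{k;\tau}$ is increasing with $w_{m+n+1;\tau}=1$; your claim that $\psi$ is decreasing on $k\ge m+n+1$ is exactly the same claim. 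The gap is that you never prove it: ``clear denominators, reduce to a polynomial inequality, verify at the endpoint, check the difference is nondecreasing thereafter'' is a program, not an argument, and this monotonicity is the entire mathematical content of the proposition.

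Moreover, the program cannot be completed, because the monotonicity is false at the lower end of the admissible range of $\tau$. Take $\tau=n$ and $\delta=0$: your four-factor ratio collapses to $\frac{(k-m)(k-n+1)}{(k-m-n)(k+1)}$, and the numerator exceeds the denominator by exactly $n(m+1)>0$, so $\psi$ is strictly \emph{increasing} on $k\ge m+n+1$ and $\psi(k)>u_\tau$ for every $k>m+n+1$. This sinks not only the termwise route but the statement itself: for the standard normal ($q\equiv1$, $\delta=0$) with $m=0$, $n=\tau=1$ and $g=\varphi_3$, one gets $R_{0,1}(g)=\E\big(g'(X)\big)^2-\Var g(X)=3-1=2$, while $u_1\E\big(g'(X)\big)^2=\tfrac12\cdot3=\tfrac32<2$; an analogous failure occurs for $\delta<0$ (e.g.\ $\delta=-1$, same $m,n,\tau$, $g=\varphi_3$ gives $r_{3;0,1}=8>\tfrac{27}{4}=u_1\vartheta_{3,1}$). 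You should be aware that the paper's own proof commits the very same sin: its ``observe that this sequence is increasing in $k$'' is stated without justification, and it is true only at the top of the range --- at $\delta=0$, monotonicity from $k=m+n+1$ onward is equivalent to $\tau(n+1)\ge n(m+n+2)$, which holds for $\tau=m+n+1$ but fails for $\tau=n$ for every $m\ge0$, $n\ge1$. So your write-up faithfully mirrors the paper, flaw included; a correct result would require either restricting $\tau$ (roughly $\tau\ge n(m+n+2)/(n+1)$ when $\delta=0$) or enlarging $u_\tau$ to the true supremum of $r_{k;m,n}(X)/\vartheta_{k,\tau}$ over $k>m+n$.
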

 \begin{proof}
 Using (\ref{E(g^(i)(X))^2}) we write the quantity (\ref{E(g^(tau)(X))^2}) in the form $\sum_{k=\tau}^{\infty}\pi_{k;\tau}c_k^2$. Next, consider the sequence $\big\{w_{k;\tau}={\pi_{k;\tau}}/{r_{k;m,n}(X)}\big\}_{k=m+n+1}^{\infty}$, where $r_{k;m,n}(X)$ are the numbers given by (\ref{(-1)^n(Var[g(X)-S_n]) 2}), and observe that this sequence is increasing in $k$, with $w_{m+n+1;\tau}=1$.
 \hfill$\square$
 \end{proof}

 \noindent
 The upper bounds (when there are at least two) of the residual $R_{m,n}(g)$, given by (\ref{E(g^(tau)(X))^2}), are not comparable. For example, consider the functions $g_1=\varphi_{\tau}$ and $g_2=\varphi_{m+n+2}$ (both belong to $\CH^\infty(X)$), where $\varphi_{k}$ are the polynomials given by (\ref{varphi_k}), and observe that
 \smallskip

 \centerline{$u_\tau\E{q}^\tau(X)\big(g_1^{(\tau)}(X)\big)=u_\tau{\tau!}\prod_{j=k-2}^{2\tau-2}(1-j\delta)>0=u_{\tau+1}\E{q}^{\tau+1}(X)\big(g_1^{(\tau+1)}(X)\big)$}
 \smallskip

 \noindent
 and
 \smallskip

 \centerline{$\ds\frac{u_\tau\E{q}^\tau(X)\big(g_2^{(\tau)}(X)\big)}{u_{\tau+1}\E{q}^{\tau+1}(X)\big(g_2^{(\tau+1)}(X)\big)}
 =\frac{(m+n-\tau+1)\big(1-(m+n+\tau)\delta\big)}{(m+n-\tau+2)\big(1-(m+n+\tau+1)\delta\big)}<1$,}
 \smallskip

 \noindent
 since $\delta\le0$.

\section{Investigating the bounds $\textrm{\em S}_{\textrm{\em m{\bf,}n}}$ for fixed \em{n}}
 \setcounter{equation}{0} \label{sec3}
 Next, for $n$ fixed, we investigate the bounds $S_{m,n}(g)$ as $m$ increases. We compare the variance bounds $S_{n,n}(g)$ and $S_n$, given by (\ref{S_n}) [for $m=n$] and (\ref{existing bounds}), respectively. Also, we compare the new upper variance bounds $S_{n,1}(g)$ and $S_{n-1,1}(g)$ with the existing Chernoff-type upper variance bounds $S_{n,{\rm(str)}}$ and $S_{n,{\rm(weak)}}$, respectively; see (\ref{S_n upper strong}) and (\ref{S_n upper weak}).

 \begin{theo}\label{theo S_n,m1 vs S_n,m2}
 Let $X\sim\IP(\mu;\delta,\beta,\gamma)$ with $\delta\le0$. Fix the positive integer $n$ and consider a function $g\in\CH^{M,n}(X)$, where $M$ can be finite ($M\ge{n}$) or infinite. Then for each $m_1,m_2$ such that $0 \le m_1 < m_2 \le M$ the following inequality holds
 \begin{equation}\label{R_n,m1 vs R_n,m2}
 \big|\Var{g(X)}-{S}_{m_1,n}(g)\big|\ge\zeta_{m_1,m_2,n}(\delta)\big|\Var{g(X)}-{S}_{m_2,n}(g)\big|,
 \end{equation}
 where $\zeta_{m_1,m_2,n}(\delta):=\frac{(m_2+n)_n\prod_{j=m_2}^{m_2+n-1}(1-j\delta)}{(m_1+n)_n\prod_{j=m_1}^{m_1+n-1}(1-j\delta)}>1$. The equality holds if and only if the function $g:J(X)\to\R$ is a polynomial of degree at most $n+m_1$.
 \end{theo}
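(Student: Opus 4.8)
The plan is to pass to the Fourier (spectral) representation of the two residuals supplied by Theorem~\ref{theo Poincare-type bounds} and then to establish the required inequality coefficient by coefficient. Since $(-1)^n\big(\Var g(X)-S_{m,n}(g)\big)=R_{m,n}(g)\ge0$ for every admissible index $m$, we have $\big|\Var g(X)-S_{m,n}(g)\big|=R_{m,n}(g)$, and, crucially, the Fourier coefficients $c_k=\E g(X)\varphi_k(X)$ appearing in
\[
R_{m,n}(g)=\sum_{k>m+n}r_{k;m,n}(X)\,c_k^2,\qquad r_{k;m,n}(X)=\frac{(k-m-1)_n\prod_{j=m+k}^{m+n+k-1}(1-j\delta)}{(m+n)_n\prod_{j=m}^{m+n-1}(1-j\delta)},
\]
do not depend on $m$. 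Because $g\in\CH^{M,n}(X)$ and the classes $\CH^{m,n}(X)$ decrease in $m$, both $R_{m_1,n}(g)$ and $R_{m_2,n}(g)$ are finite, so (\ref{R_n,m1 vs R_n,m2}) is equivalent to $R_{m_1,n}(g)\ge\zeta_{m_1,m_2,n}(\delta)\,R_{m_2,n}(g)$.

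I would then split the difference of the two residuals according to the two ranges of $k$:
\[
R_{m_1,n}(g)-\zeta_{m_1,m_2,n}(\delta)R_{m_2,n}(g)=\!\!\!\sum_{m_1+n<k\le m_2+n}\!\!\!r_{k;m_1,n}(X)\,c_k^2+\sum_{k>m_2+n}\big(r_{k;m_1,n}(X)-\zeta_{m_1,m_2,n}(\delta)r_{k;m_2,n}(X)\big)c_k^2.
\]
The coefficients of the first sum are strictly positive by Theorem~\ref{theo Poincare-type bounds}. The main work is to show that each coefficient of the second sum is strictly positive. Since $\zeta_{m_1,m_2,n}(\delta)$ equals the quotient of the ($k$-independent) denominator of $r_{k;m_2,n}$ by that of $r_{k;m_1,n}$, the inequality $r_{k;m_1,n}(X)\ge\zeta_{m_1,m_2,n}(\delta)r_{k;m_2,n}(X)$ is a comparison of the two numerators over a common denominator, and a direct substitution reduces it to
\[
\prod_{s=0}^{n-1}\frac{(k-m_1-1-s)\big(1-(m_1+k+s)\delta\big)}{(k-m_2-1-s)\big(1-(m_2+k+s)\delta\big)}\ge1,\qquad k>m_2+n.
\]

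The crux is therefore a factorwise estimate of this product. Fixing $s\in\{0,\ldots,n-1\}$ and abbreviating $a=k-1-s$ and $b=k+s$, I would study $h(m):=(a-m)\big(1-(m+b)\delta\big)$ and compute
\[
h(m_1)-h(m_2)=(m_2-m_1)\big[1-(1+2s+m_1+m_2)\delta\big].
\]
As $m_2>m_1$, $\delta\le0$ and $1+2s+m_1+m_2>0$, the right-hand side is strictly positive, while $a-m_2=k-1-s-m_2\ge k-n-m_2>0$ ensures $h(m_2)>0$ for $k>m_2+n$ and $0\le s\le n-1$; hence each factor exceeds $1$ and the product exceeds $1$. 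This yields $R_{m_1,n}(g)\ge\zeta_{m_1,m_2,n}(\delta)R_{m_2,n}(g)$. Because every coefficient in the displayed difference is strictly positive, equality holds precisely when $c_k=0$ for all $k>m_1+n$, i.e.\ when $g$ is a polynomial of degree at most $n+m_1$, exactly as in the proof of Theorem~\ref{theo Poincare-type bounds}. Finally, $\zeta_{m_1,m_2,n}(\delta)>1$ follows from the same monotonicity, since both $(m+n)_n$ and $\prod_{j=m}^{m+n-1}(1-j\delta)$ strictly increase in $m$ when $\delta\le0$. The only genuine obstacle is the sign bookkeeping across the two $k$-ranges; once the elementary identity for $h(m_1)-h(m_2)$ is in place, the remainder is routine.
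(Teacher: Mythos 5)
Your proof is correct, and it follows the same overall skeleton as the paper's: both pass to the Fourier expansion, write each residual as $R_{m,n}(g)=\sum_{k>m+n}r_{k;m,n}(X)\,c_k^2$, and establish (\ref{R_n,m1 vs R_n,m2}) coefficient by coefficient, splitting the index range at $k=m_2+n$. The difference lies in how the key termwise inequality $r_{k;m_1,n}(X)\ge\zeta_{m_1,m_2,n}(\delta)\,r_{k;m_2,n}(X)$, $k>m_2+n$, is obtained. The paper considers the ratio sequence $\zeta_{k;m_1,m_2,n}(\delta)=r_{k;m_1,n}(X)/r_{k;m_2,n}(X)$ and asserts, without detailed verification, that it is decreasing in $k$ with limit $\zeta_{m_1,m_2,n}(\delta)$, whence each term is at least the limit. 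You instead observe that $\zeta_{m_1,m_2,n}(\delta)$ is exactly the ratio of the $k$-independent denominators, reduce the inequality to a comparison of the numerators, and prove it factor by factor through the identity $h(m_1)-h(m_2)=(m_2-m_1)\big[1-(1+2s+m_1+m_2)\delta\big]$, where $h(m)=(k-1-s-m)\big(1-(m+k+s)\delta\big)$. This is more self-contained: it supplies precisely the step the paper leaves unproved. In fact, your identity also yields the paper's monotonicity claim essentially for free: $h(m_1)-h(m_2)$ does not depend on $k$, while $h(m_2)>0$ increases in $k$ (both of its factors being positive and nondecreasing in $k$ when $\delta\le0$), so each factor $h(m_1)/h(m_2)=1+\big(h(m_1)-h(m_2)\big)/h(m_2)$ decreases in $k$ to $1$, and hence so does the ratio sequence. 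One cosmetic remark: in justifying $\zeta_{m_1,m_2,n}(\delta)>1$ you say that both $(m+n)_n$ and $\prod_{j=m}^{m+n-1}(1-j\delta)$ strictly increase in $m$; when $\delta=0$ the product is constantly $1$, but since $(m+n)_n$ is strictly increasing the conclusion $\zeta_{m_1,m_2,n}(\delta)\ge(m_2+n)_n/(m_1+n)_n>1$ still stands, exactly as the paper notes after its proof.
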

 \begin{proof}
 Consider the positive sequence
 $\left\{\zeta_{k;m_1,m_2,n}(\delta)={r_{k;m_1,n}(X)}\big/{r_{k;m_2,n}(X)}\right\}_{k>m_2+n}$,
 where $r_{k;m,n}(X)$ are given by (\ref{(-1)^n(Var[g(X)-S_n]) 2}). This sequence is decreasing in $k$. Specifically,
 \[
 \zeta_{k;m_1,m_2,n}(\delta)\searrow\zeta_{m_1,m_2,n}(\delta)\equiv\frac{(m_2+n)_n\prod_{j=m_2}^{m_2+n-1}(1-j\delta)}{(m_1+n)_n\prod_{j=m_1}^{m_1+n-1}(1-j\delta)},
 \quad\textrm{as} \ k\to\infty.
 \]
 Moreover, we observe that ${r_{k;m_1,n}(X)}>0$ and ${r_{k;m_2,n}(X)}=0$ for all $k=n+m_1+1,\ldots,n+m_2$. Therefore (\ref{R_n,m1 vs R_n,m2}) follows.

 We write $\big|\Var{g(X)}-{S}_{m_1,n}(g)\big|-\zeta_{m_1,m_2,n}(\delta)\big|\Var{g(X)}-{S}_{m_2,n}(g)\big|=\sum_{k>n+m_1}\theta_{k}c_k^2$ and we observe that $\theta_{k}>0$ for all $k$. Thus, the equality in (\ref{R_n,m1 vs R_n,m2}) holds if and only if $g$ is a polynomial of degree at most $n+m_1$.
 \hfill$\square$
 \end{proof}

 Notice that if $\delta<0$ then ${\prod_{j=m_2}^{m_2+n-1}(1-j\delta)}\big/{\prod_{j=m_1}^{m_1+n-1}(1-j\delta)}>1$ for each $n$ and $m_1<m_2$. Therefore, $\zeta_{m_1,m_2,n}(\delta)\ge\zeta_{m_1,m_2,n}(0)=(m_2+n)_n/(m_1+n)_n$, since $\delta\le0$.
 \begin{rem}\label{rem infty2}\rm
 Assume the conditions of Theorem \ref{theo S_n,m1 vs S_n,m2}.
 (a) In view of Remark \ref{rem universal}(a), the bounds $\{S_{m,n}(g)\}_{m=0}^M$ are of the same type. From (\ref{R_n,m1 vs R_n,m2}) it is follows that the bound ${S}_{m_2,n}(g)$ is better than the bound ${S}_{m_1,n}(g)$. Now, writing $n=2r$ (when $n$ is even) or $n=2r+1$ (when $n$ is odd) we observe that
 \[
 S_{0,2r}(g)\le S_{1,2r}(g)\le\cdots\le\Var{g}(X)\le\cdots\le S_{1,2r+1}(g)\le S_{0,2r+1}(g).
 \]
 (b) For the case $M=\infty$, from (\ref{Var g(X)}), (\ref{S_n}) and (a) it follows that
  \[
  \substack{\displaystyle S_{m,n}(g)\nearrow\Var{g(X)} \\ \mbox{\footnotesize[when $n$ is even]}}
  \ \ \substack{\displaystyle \textrm{or} \\ \ } \ \
  \substack{\displaystyle S_{m,n}(g)\searrow\Var{g(X)},\\ \mbox{\footnotesize[when $n$ is odd]}}
  \quad
  \substack{\displaystyle \textrm{as}\quad m\to\infty. \\ \ }
  \]
 \end{rem}

 Now, we compare the existing variance bounds $S_n \big(\equiv{S_{0,n}(g)}\big)$ with the best proposed bound shown in this article requiring the same conditions on $g$, i.e., with the bound $S_{n,n}(g)$; see Remark \ref{rem universal}(b).
 \begin{cor}\label{cor S_n vs widetilde(S)_n}
 The variance bounds $S_{n,n}(g)$ and $S_n$, given by {\rm(\ref{S_n})} (for $m=n$) and {\rm(\ref{existing bounds})} respectively, are of the same type and require the same conditions on $g$. Moreover, the new bound $S_{n,n}(g)$ is better than the existing bound $S_n$. Specifically,
 \[
 \big|\Var{g(X)}-S_{n}\big|\ge{2n\choose{n}}\frac{\prod_{j=n}^{2n-1}(1-j\delta)}{\prod_{j=0}^{n-1}(1-j\delta)}\big|\Var{g(X)}-{S}_{n,n}(g)\big|.
 \]
 The equality holds only in the trivial cases when $\Var{g(X)}=S_{n,n}(g)=S_{n}$, i.e., the function $g:J(X)\to\R$ is a polynomial of degree at most $n$.
 \newline
 Note that, since $\delta\le0$, ${2n\choose{n}}{\prod_{j=n}^{2n-1}(1-j\delta)}\big/{\prod_{j=0}^{n-1}(1-j\delta)}\ge{2n\choose{n}}$.
 \end{cor}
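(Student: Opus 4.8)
The plan is to obtain this corollary as the single special case $m_1=0$, $m_2=n$ of Theorem \ref{theo S_n,m1 vs S_n,m2}, so almost nothing new has to be proved. First I would dispose of the two qualitative claims. The assertion that $S_{n,n}(g)$ and $S_n$ require the same conditions on $g$ is exactly Remark \ref{rem universal}(b): the bounds $S_{0,n}(g),\ldots,S_{n,n}(g)$ all live on the common class $\CH^{n,n}(X)$ (because $\CH^{0,n}=\cdots=\CH^{n,n}$), and by Remark \ref{rem universal and existings}(b) one has $S_n\equiv S_{0,n}(g)$, so $S_n$ and $S_{n,n}(g)$ impose identical hypotheses. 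The assertion that they are of the same type is Remark \ref{rem universal}(a): for fixed $n$ every $S_{m,n}(g)$ is an upper bound when $n$ is odd and a lower bound when $n$ is even. This settles the first two statements with no computation.

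Next I would substitute $m_1=0$ and $m_2=n$ into inequality (\ref{R_n,m1 vs R_n,m2}) of Theorem \ref{theo S_n,m1 vs S_n,m2}. The only genuine work is to simplify the constant $\zeta_{0,n,n}(\delta)$. Its numerical prefactor is $(m_2+n)_n/(m_1+n)_n=(2n)_n/(n)_n$, and writing $(2n)_n=(2n)!/n!$ and $(n)_n=n!$ gives $(2n)_n/(n)_n=(2n)!/(n!)^2=\binom{2n}{n}$; the surviving product factor is precisely $\prod_{j=n}^{2n-1}(1-j\delta)\big/\prod_{j=0}^{n-1}(1-j\delta)$. Hence $\zeta_{0,n,n}(\delta)=\binom{2n}{n}\prod_{j=n}^{2n-1}(1-j\delta)\big/\prod_{j=0}^{n-1}(1-j\delta)$, which is exactly the displayed constant. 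Since Theorem \ref{theo S_n,m1 vs S_n,m2} guarantees $\zeta_{0,n,n}(\delta)>1$, the inequality yields $\big|\Var g(X)-S_{n,n}(g)\big|\le\big|\Var g(X)-S_n\big|$, and because both bounds lie on the same side of $\Var g(X)$ by Remark \ref{rem universal}(a), the smaller absolute deviation means $S_{n,n}(g)$ is the closer, hence better, estimate.

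For the equality clause I would invoke the corresponding clause of Theorem \ref{theo S_n,m1 vs S_n,m2}: equality in (\ref{R_n,m1 vs R_n,m2}) holds if and only if $g$ is a polynomial of degree at most $n+m_1=n$. By the equality statement of Theorem \ref{theo Poincare-type bounds}, such a $g$ makes the residual $R_{m,n}(g)$ vanish for every $m$, so $\Var g(X)=S_{n,n}(g)=S_n$ simultaneously; thus equality occurs only in this trivial situation. Finally, the closing remark that the constant is at least $\binom{2n}{n}$ follows because $1-j\delta$ is nondecreasing in $j$ when $\delta\le0$, so the $n$-term numerator product over indices $n,\ldots,2n-1$ dominates the $n$-term denominator product over indices $0,\ldots,n-1$, forcing the ratio to be $\ge1$.

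Since everything reduces to a clean substitution into an already-proved theorem, I do not expect a real obstacle. The one place to be careful is the factorial bookkeeping in identifying $(2n)_n/(n)_n$ with $\binom{2n}{n}$ and in matching the index ranges of the two products, together with the logical point that a smaller absolute deviation legitimately means a \emph{better} bound — this last inference is valid precisely because Remark \ref{rem universal}(a) pins $S_n$ and $S_{n,n}(g)$ to the same side of $\Var g(X)$.
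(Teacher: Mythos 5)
Your proposal is correct and follows essentially the same route as the paper: the corollary is exactly Theorem \ref{theo S_n,m1 vs S_n,m2} specialized to $m_1=0$, $m_2=n$, combined with the identification $S_n\equiv S_{0,n}(g)$ from Remark \ref{rem universal and existings}(b) and the ``same type, same conditions'' facts from Remark \ref{rem universal}, with the constant simplifying via $(2n)_n/(n)_n=\binom{2n}{n}$ just as you compute. Your handling of the equality clause and of the closing bound $\zeta_{0,n,n}(\delta)\ge\binom{2n}{n}$ also matches the paper's (implicit) reasoning, so there is nothing to add.
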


 The quantities $S_{n,\rm(str)}$ and $S_{n,1}(g)$ are upper bounds for $\Var{g(X)}$. Both bounds are equal to $\Var{g(X)}$ if and only if the function $g$ is a polynomial of degree at most $n+1$. The quantities $S_{n,\rm(weak)}$ and $S_{n-1,1}(g)$ are upper bounds for $\Var{g(X)}$. Both bounds are equal to $\Var{g(X)}$ if and only if the function $g$ is a polynomial of degree at most $n$. Thus, it is reasonable to compare these bounds.
 \begin{theo}\label{theo S_n,(weak)-S_1,n-1 and S_n,(strong)-S_1,n}
 For $n=1,2,\ldots$ and any suitable function $g$ we have that:
 \smallskip

 \noindent
 {\rm(a)} ${S_{n,1}(g)}\le{S_{n,\rm(str)}}$, where the equality holds when $n=1$ or $n>1$ and $g$ is a polynomial of degree at most $n+1$.
  \smallskip

 \noindent
 {\rm(b)} ${S_{n-1,1}(g)}\le{S_{n,\rm(weak)}}$, where the equality holds when $n=1$ or $n>1$ and $g$ is a polynomial of degree at most $n$.
 \end{theo}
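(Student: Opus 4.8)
The plan is to run everything through the Fourier--coefficient machinery of Section~\ref{sec2} and reduce each inequality to a coefficientwise comparison of two series in the $c_k^2$; throughout I work with $g\in\CH^{n,n}(X)$, which lies in the domain of all four bounds, so the rearrangements (justified by Tonelli/Fubini exactly as in the derivation of (\ref{sum_i=1^n lamdba_i,n Eq^i(X)g^(i)(X)/Eq^i(X)})) are legitimate. First I would rewrite the two existing Chernoff-type bounds in this language. By (\ref{Eg^(i)(X)}) each summand $\E^2 q^i(X)g^{(i)}(X)\big/\big[i!\E q^i(X)\prod_{j=i-1}^{2i-2}(1-j\delta)\big]$ collapses to exactly $c_i^2$; by (\ref{E(g^(i)(X))^2}) the coefficient of $c_n^2$ in $\E q^n(X)\big(g^{(n)}(X)\big)^2$ equals $n!\prod_{j=n-1}^{2n-2}(1-j\delta)$, which is precisely what the strong correction term $\E^2 q^n(X)g^{(n)}(X)\big/\E q^n(X)$ subtracts. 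Hence
\[
S_{n,\rm(str)}=\sum_{k=1}^{n}c_k^2+\sum_{k\ge n+1}\sigma_k\,c_k^2,\qquad
S_{n,\rm(weak)}=\sum_{k=1}^{n-1}c_k^2+\sum_{k\ge n}\tau_k\,c_k^2,
\]
where $\sigma_k=(k)_n\prod_{j=k-1}^{k+n-2}(1-j\delta)\big/\big[(n+1)!\prod_{j=n}^{2n-1}(1-j\delta)\big]$ and $\tau_k=(k)_n\prod_{j=k-1}^{k+n-2}(1-j\delta)\big/\big[n!\prod_{j=n-1}^{2n-2}(1-j\delta)\big]$; a direct check gives $\sigma_{n+1}=\tau_n=1$, reflecting exactness up to degree $n+1$, respectively $n$.

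On the other side, Theorem~\ref{theo Poincare-type bounds} already writes the new bounds as $S_{n,1}(g)=\Var g(X)+R_{n,1}(g)$ and $S_{n-1,1}(g)=\Var g(X)+R_{n-1,1}(g)$, with the residual coefficients $r_{k;m,1}(X)$ read off from (\ref{(-1)^n(Var[g(X)-S_n]) 2}). Subtracting $\Var g(X)=\sum_{k\ge1}c_k^2$ from the displays above and using $\sigma_{n+1}=\tau_n=1$, the low-order contributions cancel and I obtain
\[
S_{n,\rm(str)}-S_{n,1}(g)=\sum_{k\ge n+2}\big(\sigma_k-1-r_{k;n,1}(X)\big)c_k^2,\quad
S_{n,\rm(weak)}-S_{n-1,1}(g)=\sum_{k\ge n+1}\big(\tau_k-1-r_{k;n-1,1}(X)\big)c_k^2.
\]
Thus (a) and (b) reduce to showing that every coefficient on the right is nonnegative.

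The hard part will be the two scalar inequalities $\sigma_k-1\ge r_{k;n,1}(X)$ for $k\ge n+2$ and $\tau_k-1\ge r_{k;n-1,1}(X)$ for $k\ge n+1$, uniformly in $\delta\le0$. The natural route is to clear the denominators, which are products of the strictly positive factors $(1-j\delta)$, and to read the cleared numerator as a polynomial in $u:=-\delta\ge0$. At $u=0$ both inequalities collapse, after dividing by $k$, to the clean combinatorial statement $(k-1)_{n-1}\ge n!$ for part (a) and $(k-1)_{n-1}\ge(n-1)!$ for part (b), each valid on the relevant range of $k$ and strict once $n\ge2$. For $\delta<0$ the factors $(1-j\delta)=1+ju$ appearing in $\sigma_k,\tau_k$ carry strictly larger indices than those in the denominators, whereas the index of $r_{k;m,1}(X)$ enters through the single factor $(1-(m+k)\delta)$; one therefore expects the cleared numerator to be a polynomial in $u$ with nonnegative coefficients, so that the $u=0$ case is extremal. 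Verifying this nonnegativity of all $u$-coefficients — equivalently, the monotonicity in $k$ of the ratio $(\sigma_k-1)/r_{k;n,1}(X)$, in the spirit of Proposition~\ref{prop bounds of residual} and Theorem~\ref{theo S_n,m1 vs S_n,m2} — is the genuine combinatorial obstacle; I would first isolate the common factor $(k-n-1)$ (resp. $(k-n)$), which accounts for the boundary vanishing at $k=n+1$ (resp. $k=n$), and then check the surviving coefficients term by term.

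Finally, the equality analysis is immediate from the coefficientwise form. For $n\ge2$ the coefficients are strictly positive on the whole summation range, so either difference vanishes if and only if $c_k=0$ for all $k\ge n+2$ (resp. $k\ge n+1$), i.e.\ $g$ is a polynomial of degree at most $n+1$ (resp.\ $n$). For $n=1$ one verifies the exact identities $\sigma_k-1=r_{k;1,1}(X)$ and $\tau_k-1=r_{k;0,1}(X)$ for every $k$, so both differences vanish identically in $g$; this recovers $S_{1,1}(g)=S_{1,\rm(str)}$ and $S_{0,1}(g)=S_{1,\rm(weak)}$ already recorded in Remark~\ref{rem universal and existings}.
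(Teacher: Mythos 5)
Your setup is correct and is in fact the same route the paper takes: via (\ref{Eg^(i)(X)}) the normalized squared means collapse to $c_i^2$, via (\ref{E(g^(i)(X))^2}) the quadratic terms expand into $\sum_k\sigma_k c_k^2$ and $\sum_k\tau_k c_k^2$ with exactly your $\sigma_k,\tau_k$ (and $\sigma_{n+1}=\tau_n=1$), and subtracting $\Var g(X)+R_{n,1}(g)$, resp.\ $\Var g(X)+R_{n-1,1}(g)$, reduces (a) and (b) to the coefficientwise inequalities $\sigma_k\ge 1+r_{k;n,1}(X)$ for $k\ge n+2$ and $\tau_k\ge 1+r_{k;n-1,1}(X)$ for $k\ge n+1$. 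Your equality analysis is also the right one, conditional on those inequalities being strict for $n>1$ and identities for $n=1$.

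The gap is that you never prove these inequalities when $\delta<0$: you verify only the case $u=-\delta=0$ and then state that one ``expects'' the cleared numerator to have nonnegative $u$-coefficients, deferring exactly the check that you yourself call the genuine combinatorial obstacle; that check is the entire content of the theorem. The idea you are missing is a one-line identity that merges the two subtracted terms: for any $m$,
\[
(m+1)(1-m\delta)+(k-m-1)\big(1-(m+k)\delta\big)=k\big(1-(k-1)\delta\big),
\]
hence $1+r_{k;m,1}(X)=k[1-(k-1)\delta]\big/\big[(m+1)(1-m\delta)\big]$. With this, the coefficients factor as in the paper's displays (\ref{S_n,(strong)-S_1,n}) and (\ref{S_n,(weak)-S_1,n-1}):
\[
\sigma_k-1-r_{k;n,1}(X)=\frac{k[1-(k-1)\delta]}{(n+1)(1-n\delta)}
\Big(\frac{{k-1\choose n-1}\prod_{j=k}^{n+k-2}(1-j\delta)}{n\prod_{j=n+1}^{2n-1}(1-j\delta)}-1\Big),
\]
and analogously for (b) with prefactor $k[1-(k-1)\delta]\big/\big[n(1-(n-1)\delta)\big]$ and denominator $\prod_{j=n}^{2n-2}(1-j\delta)$. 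Nonnegativity is now immediate: since $\delta\le0$ and $k\ge n+2$ (resp.\ $k\ge n+1$), every index in the numerator product exceeds the corresponding index in the denominator product, and ${k-1\choose n-1}\ge{n+1\choose n-1}=n(n+1)/2\ge n$ (resp.\ ${k-1\choose n-1}\ge{n\choose n-1}=n\ge1$), with strictness precisely when $n>1$; for $n=1$ the parenthesis vanishes identically, which is your claimed identity $\sigma_k-1\equiv r_{k;1,1}(X)$. So your plan can be completed --- after the merging, the cleared numerator is indeed $k(1+(k-1)u)$ times a bracket whose $u$-coefficients are termwise dominated, as you hoped --- but as written the proposal asserts, rather than establishes, the decisive step.
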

 \begin{proof}
 (a) From (\ref{S_n}) and (\ref{S_n upper strong}), via (\ref{E(g^(i)(X))^2}) and (\ref{Eg^(i)(X)}), we have that
 \begin{equation}\label{S_n,(strong)-S_1,n}
 \mbox{$S_{n,\rm(str)}-S_{n,1}(g)=\sum_{k>n+1}\frac{k[1-(k-1)\delta]}{(n+1)(1-n\delta)}
                            \Big(\frac{{k-1\choose n-1}\prod_{j=k}^{n+k-2}(1-j\delta)}{n\prod_{j=n+1}^{2n-1}(1-j\delta)}-1\Big)c_k^2,$}
 \end{equation}
 where each coefficient of $c_k^2$, $k>n+1$, is zero when $n=1$ and is positive when $n>1$.

 \noindent
 (b) Similarly, from (\ref{S_n}) and (\ref{S_n upper weak}), via (\ref{E(g^(i)(X))^2}) and (\ref{Eg^(i)(X)}), it follows that
 \begin{equation}\label{S_n,(weak)-S_1,n-1}
 \mbox{$S_{n,\rm(weak)}-S_{n-1,1}(g)=\sum_{k>n}\frac{k[1-(k-1)\delta]}{n[1-(n-1)\delta]}
                              \Big(\frac{{k-1\choose n-1}\prod_{j=k}^{n+k-2}(1-j\delta)}{\prod_{j=n}^{2n-2}(1-j\delta)}-1\Big)c_k^2,$}
 \end{equation}
 where each coefficient of $c_k^2$, $k>n$, is zero when $n=1$ and is positive when $n>1$.
 \hfill$\square$
 \end{proof}
 \begin{rem}\label{rem S_n,(weak)-S_1,n-1 and S_n,(strong)-S_1,n}
 \rm
 For each $n=2,3,\ldots$ it follows that:
 \smallskip

 \noindent
 (a) The bound ${S_{n,1}(g)}$ is better than the bound ${S_{n,\rm(str)}}$; notice that the bound $S_{n,1}(g)$ requires a milder finiteness condition, $g\in\CH^{n,1}(X)$, compared to $S_{n,\rm(str)}$ which requires that $g\in\CH^{n,n}(X)\subseteq\CH^{n,1}(X)$.
 \smallskip

 \noindent
 (b) The bound ${S_{n-1,1}(g)}$ is better than the bound ${S_{n,\rm(weak)}}$; as in (a), the bound $S_{n-1,1}(g)$ requires a weaker finiteness condition, i.e. $g\in\CH^{n-1,1}(X)$, rather than $S_{n,\rm(weak)}$, i.e. $g\in\CH^{n,n}(X)\subseteq\CH^{n-1,1}(X)$.
 \end{rem}

 \noindent
 {\bf Final Conclusion} \ {\it The variance bounds given by Theorem {\rm\ref{theo Poincare-type bounds}}, for appropriate choices of $n$ and $m$, either provide existing univariate variance bounds or improvements. Our bounds cover all usual cases, namely:
 {\rm
 \begin{itemize}
 \item Chernoff-type

       [\cite{Nash,BL,Cher,CP1,PP,P-Rao,AP2}],
 \item Poincar\'e-type

       [\cite{Pap1,Cac2,Joh,HK,APP}],
 \item Bessel-type

       [\cite{Cac,HK,APP}].
 \end{itemize}}
 \noindent
 Note that no further conditions on the function $g$ are imposed; instead, the new bounds require the same or weaker conditions, see Remarks {\rm\ref{rem universal}}, {\rm\ref{rem universal and existings}} and {\rm\ref{rem S_n,(weak)-S_1,n-1 and S_n,(strong)-S_1,n}}, Theorem {\rm\ref{theo S_n,(weak)-S_1,n-1 and S_n,(strong)-S_1,n}}, and Corollary {\rm\ref{cor S_n vs widetilde(S)_n}}. Therefore, the new proposed variance bounds outweigh all the existing variance bounds presented in the bibliography.}

 \begin{acknowledgements}
 From this position I would like to thank Professor N. Papadatos for his helpful observations and comments. I would also like to thank an anonymous Associate Editor who carefully read the revised manuscript and kindly brought to my attention a typing error in the proof of Lemma \ref{lem E q^m(X) |g^(m)(X)|<00 => E q^i(X) |g^(i)(X)|<00}.
 \end{acknowledgements}
 \vspace{3.9ex}

 \begin{appendix}
 \noindent{\large\bf Appendix}
 \vspace{-2em}
 \section{Some useful properties of the Integrated Pearson Family}
 \label{appendix properties}
 \setcounter{equation}{0}
 The following properties have been reproduced from \cite{APP,AP1,AP2} and are stated here for easy reference.
 \smallskip

 \noindent
 Consider a random variable $X$ with density $f\sim\IP(\mu;q)\equiv\IP(\mu;\delta,\beta,\gamma)$.

 Let $a\in(1,+\infty)$. Then $E|X|^a<\infty$ if and only if $\delta<1/(a-1)$. Notice that $X$ has finite moments of any order if and only if $\delta\le0$; see
 \cite[Corollary 2.2]{AP1}.

 The support of $X$ is the interval $J(X)=(\alpha,\omega)$ and the density $f\in C^\infty(\alpha,\omega)$, see \cite{AP1}.

 If $\E|X|^{2i+1}<\infty$, $i\in\N^*\equiv\N\smallsetminus\{0\}$ (that is, $\delta<1/2i$), then the random variable $X_i$ with density function $f_i(x)=q^i(x)f(x)/\E q^i(X)$ follows $\IP(\mu_i;q_i)$ distribution with $\mu_i=(\mu+i\beta)/(1-2i\delta)$ and $q_i=q/(1-2i\delta)$; see \cite[Theorem 5.2]{AP1}. Note that if $\E|X|^{2i}<\infty$ and $\E|X|^{2i+1}=\infty$ then the function $f_i$ is a probability density function; however, $\E|X_i|=\infty$ so $X_i$ does not belong to the Integrated Pearson Family.

 If $\E|X|^{2N}<\infty$, $N\in\N^*$ (that is, $\delta<1/(2N-1)$), then the quadratic $q$ generates the orthogonal polynomials through the Rodrigues-type formula, \cite{Pap2},
 %\begin{equation}\label{P_k}
 \[
 P_k(x)=\frac{(-1)^k}{f(x)}\frac{\ud^k}{\ud x^k}\big[q^k(x)f(x)\big],\quad x\in J(X),\quad k=0,1,\ldots,N.
 \]
 %\end{equation}
 \cite{APP} showed an extended Stein-type identity of order $n$. This identity takes the form $\E{P_k(X)g(X)}=\E{q^k(X)g^{(k)}(X)}$, provided that $\E{q^k(X)|g^{(k)}(X)}|<\infty$. Also, $\E{P_k(X)P_m(X)}=\delta_{k,m}k!\E{q^k(X)}\times\prod_{j=k-1}^{2k-2}(1-j\delta)$; thus, the system of polynomials $\{\varphi_k\}_{k=0}^N$ is orthonormal, with respect to density $f$, where
 \begin{equation}\label{varphi_k}
 \varphi_k(x)=P_k(x)\Big(k!\E{q^k(X)}\prod_{j=k-1}^{2k-2}(1-j\delta)\Big)^{-1/2}.
 \end{equation}
 Hence,
 \begin{equation}\label{E q^k(X)g^(k)(X)}
 \E q^k(X)g^{(k)}(X)=\Big(k!\E{q^k(X)}\prod_{j=k-1}^{2k-2}(1-j\delta)\Big)^{-1/2}\E\varphi_k(X)g(X).
 \end{equation}
 Moreover, the system of polynomials $\big\{\varphi^{(i)}_{k+i}\big\}_{k=0}^{N-i}$ [where $\varphi_k$ are the polynomials given by (\ref{varphi_k}) and $\varphi^{(i)}_{k+i}$ is the $i$-th derivative of $\varphi_{k+i}$] is orthogonal with respect to density $f_i$. Specifically, if $\varphi_{k,i}$ are the orthonormal  polynomials corresponding to the density $f_i$ then $\varphi^{(i)}_{k+i}(x)=\nu^{(i)}_{k}\varphi_{k,i}(x)$, where $\nu^{(i)}_{k}=\nu^{(i)}_{k}(X):=\big[(k+i)_i\times\big(\prod_{j=k+i-1}^{k+2i-2}(1-j\delta)\big)/\E{q^i}(X)\big]^{1/2}$, see \cite[Corollary 5.4]{AP1}.
 \begin{lem}\label{lem E q^i|g^(i)|}
 {\rm [\cite[Theo. 3.1(b), p. 516]{APP}]}\quad
 Let $X\sim\IP(\mu;q)$, with $\E X^{2k}<\infty$, and a suitable function $g$, with $\E q^k(X)|g^{(k)}(X)|<\infty$, then $\E|P_k(X)g(X)|<\infty$.
 \end{lem}
 \begin{lem}\label{lem E q^i(g^(i))^2}
 {\rm [\cite{AP2}]}\quad
 Let a random variable $X\sim\IP(\mu;q)$ and consider the strictly positive integers $n$ and $N$ such that $n\le N$ and $\E|X|^{2N}<\infty$.
 \begin{equation}\label{E q^n(X)g^(n)(X)<inf => E q^i(X)g^(i)(X)<inf}
 \textrm{If} \ g\in\CH^{n,n}(X) \ \textrm{then} \ g^{(i)}\in\CH^{n-i,n-i}(X_i)\quad \textrm{for each}\ i=0,1,\ldots n-1.
 \end{equation}
 \begin{equation}\label{E varphi_k,i(X_i)g^(i)(X_i)}
 \E\varphi_{k,i}(X_i)g^{(i)}(X_i)=\nu_k^{(i)}(X)\E\varphi_{k+i}(X)g(X)\quad \textrm{for each}
 \left\{
 \begin{array}{l}
   i=1,2,\ldots,n, \\
   k=0,1,\ldots,N-i.
 \end{array}
 \right.
 \end{equation}
 \end{lem}

  If the parameter $\delta$ of $q$ is non-positive then the moment generating function of $X$ is finite in a neighborhood of zero; thus the system of polynomials $\{\varphi_k\}_{k=0}^{\infty}$ forms an orthonomal basis of $L^2(\R,X)$ and the Parseval identity holds, see \cite{AP1}. Notice that for each $i\in\N$ the  parameter $\delta_i=\frac{\delta}{1-2i\delta}$ is non-positive too. Thus, the system of polynomials $\{\varphi_{k,i}\}_{k=0}^{\infty}$ is an orthonomal basis of $L^2(\R,X_i)$ and the Parseval identity holds too.

 \section{The solution of the system (\ref{system of equations})}
 \label{appendix system}
 \setcounter{equation}{0}
 Consider the determinants $d_{m,n}=\det(A_{m,n})$ and $d_{i;m,n}=\det(A_{i;m,n})$, $i=1,2,\ldots,n$, where the matrix $A_{i;m,n}$ is formed from $A_{m,n}$ by replacing column $i$ with the vector $\mathbf{1}_n$.

 For each $t=1,2,\ldots,n$ define the matrix $B_{m,n}(t)\in\R^{(n-t+1)\times(n-t+1)}$ [$m,n$ are fixed] which has elements $b_{r,c;m,n}(t)=(m+r-1)_{c-1}\prod_{j=m+r+t-1}^{m+r+c+t-3}(1-j\delta)$, where empty products are treated as one. Observe that

 \noindent
 $d_{m,n}={(m+n)_n\big(\prod_{j=m}^{m+n-1}(1-j\delta)\big)}\det\big(B_{m,n}(1)\big)$ and

 \noindent
 $\det\big(B_{m,n}(t)\big)=(n-t)!\big(\prod_{j=m+1}^{m+n-t}(1-[2j+(t-1)]\delta)\big)\det\big(B_{m,n}(t+1)\big)$, $t=1,2,\ldots,n-1$.

 \noindent
 Thus, it follows that

 \centerline{$d_{m,n}=(m+n)_n\big[\prod_{j=0}^{n-1}j!\big]\big[\prod_{j=m}^{m+n-1}(1-j\delta)\big]\prod_{t=1}^{n-1}\!\!\big(\prod_{j=m+1}^{m+n-t}(1-[2j+(t-1)]\delta)\big)\ne0$.}

 Now, for each $t=1,2,\ldots,n$ define the matrix $B_{i,m,n}(t)\in\R^{(n-t+1)\times(n-t+1)}$ [$i,m,n$ are fixed integers] which has $(r,c)$-element $b_{r,c;i,m,n}(t)=(m+r)_{c-1}\prod_{j=m+r+t-2}^{m+r+c+t-4}(1-j\delta)$, when $c=1,2,\ldots,i-t$, and $b_{r,c;i,m,n}(t)=(m+r)_{c}\prod_{j=m+r+t-2}^{m+r+c+t-3}(1-j\delta)$, when $c=i-t+1,i-t+2,\ldots,n-t+1$. Observe that

 \noindent
 $d_{i;m,n}=(-1)^{i-1}\det\big(B_{i,m,n}(1)\big)$,

 \noindent
 $\det\big(B_{i,m,n}(t)\big)=\frac{(n-t+1)!}{(i-t+1)}\big(\prod_{j=m+1}^{m+n-t}(1-[2j+(t-1)]\delta)\big)\det\big(B_{i,m,n}(t+1)\big)$, $t=1,2,\ldots,i-1$, and

 \noindent
 $\det\big(B_{i,m,n}\!(i)\big)=(n-i+1)!\big(\!\prod_{j=m+1}^{m+n-i}(1-[2j+(i-1)]\delta)\!\big)(m+n-i)_{n-i}\big(\!\prod_{j=m+i}^{m+n-1}\!(1-j\delta)\!\big)\det\big(B_{m,n}\!(i+1)\big)$.

 \noindent
 Thus, it follows that

 \centerline{$d_{i;m,n}=(-1)^{i-1}\frac{(m+n-i)_{n-i}}{i!(n-i)!}\big[\prod_{j=0}^{n}j!\big]\big[\prod_{j=m+i}^{m+n-1}(1-j\delta)\big]\prod_{t=1}^{n-1}\big(\prod_{j=m+1}^{m+n-t}(1-[2j+(t-1)]\delta)\big)$.}

 Therefore, according to Cram\'er's rule, (\ref{lambda_i,n;0}) follows.

 \section{A useful hypergeometric series}
 \label{appendix hypergeometric}
 \setcounter{equation}{0}
 \begin{lem}\label{lem hypergeometric}
 Let $m,n,k\in\N$ and $\delta\le0$. Then the following hypergeometric series holds:
 \begin{equation}\label{Hypergeometric}
 \sum_{i=0}^{n}(-1)^{i}{n\choose{i}}\frac{(k)_i}{(m+n)_i}\frac{\prod_{j=k-1}^{k+i-2}(1-j\delta)}{\prod_{j=m}^{m+i-1}(1-j\delta)}
 =\frac{(m+n-k)_n\prod_{j=m+k}^{m+n+k-1}(1-j\delta)}{(m+n)_n\prod_{j=m}^{m+n-1}(1-j\delta)}.
 \end{equation}
 \end{lem}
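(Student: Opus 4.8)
The plan is to recognize the left-hand side of (\ref{Hypergeometric}) as a terminating, balanced (Saalsch\"utzian) ${}_3F_2$ hypergeometric series at argument $1$ and then to evaluate it by the Pfaff--Saalsch\"utz theorem. Assume first $\delta<0$, so that $\gamma:=1/\delta$ is a finite negative number, and write $1-j\delta=\delta(\gamma-j)$. Each length-$i$ product then becomes a power of $\delta$ times a falling factorial in $\gamma$; concretely $\prod_{j=k-1}^{k+i-2}(1-j\delta)=\delta^i(\gamma-k+1)_i$ and $\prod_{j=m}^{m+i-1}(1-j\delta)=\delta^i(\gamma-m)_i$, so the powers of $\delta$ cancel in the ratio. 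Using $(-1)^i{n\choose i}=[-n]_i/i!$ and $(x)_i=(-1)^i[-x]_i$ to turn the remaining falling factorials into rising ones, the $i$-th summand collapses (the $(-1)^i$ factors cancelling) to $\dfrac{[-n]_i\,[-k]_i\,[k-1-\gamma]_i}{[-(m+n)]_i\,[m-\gamma]_i\,i!}$; that is, the left-hand side equals ${}_3F_2\big(-n,-k,k-1-\gamma;\,-(m+n),m-\gamma;\,1\big)$.

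Next I would check the two hypotheses of the theorem. The series terminates because $-n$ is a non-positive integer, and the denominator factors $[-(m+n)]_i$ never vanish for $0\le i\le n$ (a factor would vanish only at index $m+n$, but $m+n\ge n>i-1$). The balance condition --- the sum of the numerator parameters plus one equals the sum of the denominator parameters --- holds, since both sides equal $-n-\gamma$. Pfaff--Saalsch\"utz, in the form ${}_3F_2(-n,a,b;\,c,1+a+b-c-n;\,1)=\dfrac{[c-a]_n[c-b]_n}{[c]_n[c-a-b]_n}$, applied with $a=-k$, $b=k-1-\gamma$, $c=-(m+n)$ (one checks $1+a+b-c-n=m-\gamma$), then evaluates the series as $\dfrac{[k-m-n]_n\,[\gamma-m-n-k+1]_n}{[-(m+n)]_n\,[\gamma-m-n+1]_n}$.

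Finally I would convert these four Pochhammer symbols back to falling factorials and match with the right-hand side. The two with non-positive argument satisfy $[k-m-n]_n=(-1)^n(m+n-k)_n$ and $[-(m+n)]_n=(-1)^n(m+n)_n$, so their signs cancel; the two $\gamma$-dependent ones satisfy $[\gamma-m-n-k+1]_n=(\gamma-m-k)_n$ and $[\gamma-m-n+1]_n=(\gamma-m)_n$ (a rising factorial equals the reversed falling factorial over the same block of factors, hence no sign). This yields $\dfrac{(m+n-k)_n(\gamma-m-k)_n}{(m+n)_n(\gamma-m)_n}$, which is precisely the right-hand side of (\ref{Hypergeometric}) once one restores $(\gamma-m-k)_n=\delta^{-n}\prod_{j=m+k}^{m+n+k-1}(1-j\delta)$ and $(\gamma-m)_n=\delta^{-n}\prod_{j=m}^{m+n-1}(1-j\delta)$. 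The remaining case $\delta=0$ follows by continuity: both sides are rational functions of $\delta$ (well defined at $0$), agreeing for all $\delta<0$, hence also at $\delta=0$ (alternatively, at $\delta=0$ the identity is a direct Chu--Vandermonde evaluation).

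I expect the only real obstacle to be the bookkeeping in the two translation steps --- first matching the products $\prod(1-j\delta)$ to Pochhammer symbols with the correct index ranges and signs, and then converting the evaluated product back --- since the core evaluation is a one-line appeal to Pfaff--Saalsch\"utz once the balance condition is verified. As an alternative avoiding the named theorem, the identity can be proved by induction on $n$ using ${n\choose i}={n-1\choose i}+{n-1\choose i-1}$ together with a contiguous shift in $k$, but the hypergeometric argument is cleaner and sidesteps a lengthy inductive computation.
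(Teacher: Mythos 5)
Your proof is correct and follows essentially the same route as the paper: it too splits into the cases $\delta<0$ and $\delta=0$, rewrites the products $\prod(1-j\delta)$ as factorials in $1/\delta$, and evaluates the resulting terminating balanced ${}_3F_2$ at unit argument --- the identity the paper attributes to Dougall (1907) is precisely the Pfaff--Saalsch\"utz summation you invoke, just written with falling instead of rising factorials. The only cosmetic difference is at $\delta=0$, where the paper applies Vandermonde's formula directly rather than arguing by continuity (an alternative you also note).
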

 \begin{proof}
 For the case $\delta=0$, write (\ref{Hypergeometric}) as
 \[
 \sum_{i=0}^{n}(-1)^{i}{n\choose{i}}\frac{(k)_i}{(m+n)_i}=\frac{(m+n-k)_n}{(m+n)_n}
 \]
 and observe that this follows from Vandermonde's formula,
 %\begin{equation}\label{Vandrermonde}
 \[
 \sum_{i=0}^{n}(-1)^i{n\choose{i}}\frac{(x)_i}{(x+y)_i}=\frac{(y)_n}{(x+y)_n},
 \]
 %\end{equation}
 by replacing $x$ with $k$ and $y$ with $m+n-k,$ see \cite[p. 125]{Char}.
 \smallskip

 \noindent
 For the case $\delta<0$, write (\ref{Hypergeometric}) as
 \[
 \sum_{i=0}^{n}(-1)^{i}\frac{(n)_i(k)_i(1/\delta+1-k)_i}{i!(m+n)_i(1/\delta-m)_i}=\frac{(m+n-k)_n(1/\delta-m-k)_n}{(m+n)_n(1/\delta-m)_n}.
 \]
 This follows from Dougall's identity,
 %\begin{equation}\label{Dougall}
 \[
 \sum_{i=0}^s\frac{(\alpha)_i(\beta)_i(s)_i}{i![\gamma+1]_i(\alpha+\beta+\gamma+s)_i} =\frac{[\alpha+\gamma+1]_s[\beta+\gamma+1]_s}{[\gamma+1]_s[\alpha+\beta+\gamma+1]_s},
 \]
 %\end{equation}
 using the substitution $\alpha\mapsto k$, $\beta\mapsto(1/\delta+1-k)$, $\gamma\mapsto(-m-n-1)$ and $s\mapsto n$, see \cite[eq. (2)]{Dougall}.
 \hfill$\square$
 \end{proof}

\end{appendix}
\end{document}